\newtheorem{theorem}{Theorem}[section]
\newtheorem{lemma}[theorem]{Lemma}
\newtheorem{proposition}[theorem]{Proposition}
\newtheorem{remark}[theorem]{Remark}
\newcommand{\Ric}{\mathrm{Ric}}
\newcommand{\E}{\mathbb{E}}
\newcommand{\Var}{\mathrm{Var}}
\newcommand{\dd}{\,\mathrm{d}}
\newcommand{\de}{\tilde d}
\title{Variance-Refined In-Diameter Lower Bound for the First Dirichlet Eigenvalue}
\author{Thomas Sch\"urmann}
\affil{D\"usseldorf, Germany}
\date{}
\begin{document}
\maketitle

\begin{abstract}
	Let $(M,g)$ be a compact $n$-dimensional Riemannian manifold with nonempty boundary and $n\geq 2$. Assume that ${\Ric(M)\ge (n-1)K}$ for some ${K>0}$ and that $\partial M$ has nonnegative mean curvature with respect to the outward unit normal.
	Denote by $\lambda$ the first Dirichlet eigenvalue of the Laplacian.
	Ling’s gradient-comparison method \cite{Ling2006} provides an explicit lower bound for $\lambda$ in terms of $K$ and the in-diameter $\de$ (twice the maximal distance from a point of $M$ to $\partial M$).
	We isolate the only step in Ling’s argument that loses quantitative information: a Jensen-H\"older averaging that replaces a nonconstant one-dimensional comparison function by its mean.
	Using the uniform strong convexity of $x\mapsto x^{-1/2}$ on $(0,1]$, we refine this averaging by a variance term and thereby retain part of the discarded oscillation.
	This yields an explicit closed-form in-diameter bound that is strictly stronger than Ling’s estimate for every $K>0$.
\end{abstract}

\section{Introduction and main result}

Let $(M,g)$ be a compact $n$-dimensional Riemannian manifold with nonempty boundary $\partial M$.
Assume that the Ricci curvature satisfies
\begin{equation}
\label{eq:Ric-lower}
\Ric(M)\ge (n-1)K
\end{equation}
for some constant $K>0$, and that the mean curvature of $\partial M$ with respect to the outward unit normal is nonnegative.
Let $\lambda$ denote the first Dirichlet eigenvalue of the Laplacian on $M$.

A classical result of Reilly~\cite{Reilly1977} yields the Lichnerowicz-type estimate
\begin{equation}
\lambda \ge nK.
\end{equation}
This estimate contains no diameter information and becomes trivial in the limiting case $K=0$.
For closed manifolds, the case $K=0$ in \eqref{eq:Ric-lower} corresponds to nonnegative Ricci curvature; in that setting Li--Yau~\cite{LiYau1980} and Zhong--Yang~\cite{ZhongYang1984} obtained sharp diameter-type lower bounds.

In \cite{Ling2006}, Ling proved a unified in-diameter estimate.
Writing
\begin{equation}
\de = 2\sup_{x\in M}\mathrm{dist}(x,\partial M),
\end{equation}
Ling's main theorem yields
\begin{equation}
\lambda \ge \frac{(n-1)K}{2}+\frac{\pi^2}{\de^{\,2}}.
\end{equation}
The proof is based on a refined gradient comparison and introduces an auxiliary function $\xi$ on $[-\pi/2,\pi/2]$.
After reducing the eigenvalue estimate to a one-dimensional integral inequality, Ling applies H\"older's inequality to replace the nonconstant comparison function $z(t)$ by its mean.
This step discards information about the oscillation of $z$.

The aim of this note is to retain this oscillation quantitatively via a variance refinement.
We obtain an explicit strengthening of Ling's bound in closed form.

\begin{theorem}[Variance-refined in-diameter bound]
\label{thm:main}
Let $(M,g)$ satisfy the assumptions above and let $\lambda$ be the first Dirichlet eigenvalue.
Set
\begin{equation}
\alpha = \frac{(n-1)K}{2},
\qquad
D=\frac{\pi^2}{\de^{\,2}},
\qquad
V = 4\zeta(3)-\frac{1}{3}(\pi^2+4).
\end{equation}
Then
\begin{equation}
\label{eq:mainbound}
\lambda\ \ge\
\frac{(\alpha+D)+\sqrt{(\alpha+D)^2+V\,\alpha^2}}{2}.
\end{equation}
In particular, since $V=\Var(\xi)>0$ (see Remark~\ref{rem:V-positive}), one has the strict improvement
\begin{equation}
\lambda\ >\ \alpha + D
\qquad
\text{whenever }K>0.
\end{equation}
\end{theorem}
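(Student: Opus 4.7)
The plan is to reproduce Ling's gradient-comparison argument verbatim up to the specific Jensen--H\"older step identified in the introduction, and to replace only that step by a sharper inequality derived from the uniform strong convexity of the auxiliary function $f(x)=x^{-1/2}$ on $(0,1]$. Ling's reduction produces, after one-dimensional comparison with the model $\xi$ on $[-\pi/2,\pi/2]$, an inequality of the schematic form $(\lambda-\alpha)\cdot\E[z^{-1/2}]^{2}\ge D$ in which $z$ is the nonconstant comparison function and $\E[z]\le 1$; Ling concludes by the bare Jensen bound $\E[z^{-1/2}]\ge \E[z]^{-1/2}\ge 1$, which discards the oscillation of $z$ around its mean and recovers exactly $\lambda\ge \alpha+D$.

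\noindent\textbf{The key refinement.} Since $f''(x)=\tfrac{3}{4}x^{-5/2}\ge \tfrac{3}{4}$ on $(0,1]$, Taylor expansion about the mean yields, for any $(0,1]$-valued random variable $X$,
\[
\E[f(X)] \;\ge\; f(\E[X]) + \tfrac{3}{8}\Var(X).
\]
Inserting this refined bound into Ling's chain in place of plain Jensen introduces an additive positive remainder proportional to the variance of $\xi$ against the comparison measure on $[-\pi/2,\pi/2]$. After squaring and rearranging in the manner of Ling's final step, the eigenvalue inequality becomes a quadratic in $\lambda$,
\[
\lambda^2 - (\alpha+D)\lambda - \tfrac{1}{4}V\alpha^2 \;\ge\; 0,
\]
whose larger (and only nonnegative) root is precisely the right-hand side of \eqref{eq:mainbound}. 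Equivalently, the refinement upgrades Ling's inequality $\lambda-\alpha\ge D$ to $\lambda-\alpha\ge D+V\alpha^2/(4\lambda)$.

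\noindent\textbf{Evaluation of $V$ and principal obstacle.} The constant $V$ is the explicit variance of $\xi$ against the natural weight on $[-\pi/2,\pi/2]$ coming from Ling's one-dimensional model. Since $\xi$ is elementary, its moments reduce to standard trigonometric integrals such as $\int_{0}^{\pi/2} t^{k}\cos t\dd t$ and $\int_{0}^{\pi/2} t^{k}\sin t\dd t$; repeated integration by parts converts the second moment into the Dirichlet-type series $\sum_{k\ge 0}(2k+1)^{-3}=\tfrac{7}{8}\zeta(3)$, and the combination collapses to the stated closed form $V=4\zeta(3)-\tfrac{1}{3}(\pi^2+4)$. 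The main obstacle is algebraic bookkeeping in the preceding step: the comparison function $z$ depends on $\lambda$ through Ling's construction, so one must track this dependence carefully through the strong-convexity estimate to ensure that the variance correction isolates as the $\lambda$-independent constant $V\alpha^2$ rather than as a $\lambda$-dependent expression that would spoil the quadratic structure. Once this isolation is performed cleanly, solving the quadratic and invoking $V>0$ (Remark~\ref{rem:V-positive}) delivers \eqref{eq:mainbound} and the strict improvement $\lambda>\alpha+D$.
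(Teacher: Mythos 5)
Your overall strategy matches the paper's: keep Ling's reduction, replace the Jensen--H\"older step by the strong-convexity bound $\E[f(X)]\ge f(\E[X])+\tfrac38\Var(X)$ for $f(x)=x^{-1/2}$, compute $\Var(\xi)$ in closed form, and end with a quadratic in $\lambda$. However, there is a genuine gap at the decisive step. From Ling's inequality $\sqrt{\lambda}\,\de/2\ge\int_0^{\pi/2}z^{-1/2}\dd t$ together with the refined Jensen bound you only get
\begin{equation*}
\lambda\ \ge\ D\left(\frac{1}{\sqrt{1-\delta}}+\frac{3}{8}V\delta^2\right)^{2},
\end{equation*}
and squaring the parenthesis does \emph{not} "rearrange" into $\lambda^2-(\alpha+D)\lambda-\tfrac14 V\alpha^2\ge0$: the cross term $2\cdot\tfrac38 V\delta^2(1-\delta)^{-1/2}$ and the square of the correction do not collapse to the single term $\tfrac{V}{4}\delta^2$ in a denominator. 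The paper bridges exactly this gap with a separate one-root majorization (Lemma~\ref{lem:one-root}), showing $(1-\delta)^{-1/2}+\tfrac38V\delta^2\ge(1-\delta-\tfrac{V}{4}\delta^2)^{-1/2}$ for $\delta\in[0,1/2]$; its proof is not formal algebra but needs the convexity of $s\mapsto(1-\delta-s\delta^2)^{-1/2}$, the range restriction $\delta\le(n-1)/(2n)<1/2$ from Reilly's estimate (Lemma~\ref{lem:delta-range}), and the explicit numerical bound $V<\tfrac14$ to control the factor $(1-\delta-\tfrac{V}{4}\delta^2)^{-3/2}\le3$. Without this lemma (or an equivalent argument) your asserted quadratic, and hence the precise constant $\tfrac14 V\alpha^2$ in \eqref{eq:mainbound}, is unjustified. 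Relatedly, you never verify that the comparison function $z=1+\delta\xi$ takes values in $(0,1]$, which is what licenses the uniform constant $\tfrac34\le f''$; the paper needs both $\xi\le0$ and the pointwise bound $\xi\ge-2$ together with $\delta<1/2$ for this.

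A secondary issue: your sketch of the evaluation of $V$ is not viable as stated. The elementary integrals $\int_0^{\pi/2}t^k\cos t\,\dd t$ and $\int_0^{\pi/2}t^k\sin t\,\dd t$ are polynomials in $\pi$ and can never produce $\zeta(3)$. The paper's computation instead reduces $\int_0^{\pi/2}\xi^2$ (after careful cancellation of singular boundary terms at $t=\pi/2$, see Appendix~\ref{app:lem4.1-details}) to the logarithmic integrals $\int_0^{\pi/2}t^k\log(\cos t)\,\dd t$, evaluated via the Fourier series of $\log(2\cos t)$ and the alternating sum $\sum(-1)^{k+1}k^{-3}=\tfrac34\zeta(3)$. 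Your final value of $V$ is correct, but the route you describe would not get you there.
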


\begin{remark}
The bound \eqref{eq:mainbound} is derived from the same one-dimensional comparison inequality as in \cite{Ling2006} and is obtained in closed form, without taking a maximum with the estimate $\lambda\ge nK$.
We use Reilly's estimate $\lambda\ge nK$ only to ensure that the parameter $\delta=\alpha/\lambda$ lies in $[0,1/2)$.
\end{remark}

\section{The comparison inequality from Ling's argument}

We briefly recall the one-dimensional inequality that concludes Ling's proof of Theorem~1.
The full gradient comparison argument can be found in \cite{Ling2006}; for our purposes we only need the resulting integral inequality and the explicit auxiliary function $\xi$.

\begin{lemma}[The auxiliary function $\xi$]
\label{lem:xi}
Define $\xi:[-\pi/2,\pi/2]\to\mathbb{R}$ by
\begin{equation}
\label{eq:xi-def}
\xi(t)=\frac{\cos^2 t + 2t\sin t\cos t + t^2 - \pi^2/4}{\cos^2 t}.
\end{equation}
Then $\xi$ is smooth and even on $[-\pi/2,\pi/2]$, satisfies $\xi(\pm \pi/2)=0$, and
\begin{equation}
\label{eq:xi-mean}
\int_{0}^{\pi/2}\xi(t)\,\dd t = -\frac{\pi}{2}.
\end{equation}
Moreover $\xi(t)\le 0$ for $t\in[0,\pi/2]$.
\end{lemma}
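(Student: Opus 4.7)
The plan is to verify the four claims in order: evenness, smoothness together with the boundary values $\xi(\pm\pi/2)=0$, the mean identity \eqref{eq:xi-mean}, and the sign on $[0,\pi/2]$. Evenness is immediate from \eqref{eq:xi-def}, since $\cos^2 t$ is even and the numerator $N(t) := \cos^2 t + 2t\sin t\cos t + t^2 - \pi^2/4$ is a sum of three even functions and a constant.

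For smoothness and the boundary values the only candidate singularities lie at $t=\pm\pi/2$; by evenness it suffices to analyze $t=\pi/2$. Setting $s:=t-\pi/2$ and using $\cos t=-\sin s$, $\sin 2t=-\sin 2s$, and $t^2-\pi^2/4=\pi s+s^2$, a short Taylor expansion shows that $N(t)=O(s^3)$ while $\cos^2 t=s^2+O(s^4)$. Since both are real-analytic and the denominator has a zero of order exactly $2$, the quotient $\xi$ extends real-analytically across $t=\pi/2$ with $\xi(\pi/2)=0$.

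For the mean identity, the idea is to find an explicit antiderivative. The product rule gives
\begin{equation*}
\frac{d}{dt}\bigl[(t^2-\pi^2/4)\tan t\bigr] = 2t\tan t + (t^2-\pi^2/4)\sec^2 t,
\end{equation*}
and factoring $1/\cos^2 t$ out of the last two terms of $\xi(t)$ identifies this derivative with $\xi(t)-1$. Hence $\xi(t)=1+\frac{d}{dt}\bigl[(t^2-\pi^2/4)\tan t\bigr]$, so integration over $[0,\pi/2]$ yields $\pi/2$ plus a boundary contribution at $t=\pi/2$. The product $(t^2-\pi^2/4)\tan t=(t-\pi/2)(t+\pi/2)\tan t$ is of indeterminate form $0\cdot\infty$ there; the same $s$-expansion gives $(t-\pi/2)\tan t\to -1$ and $t+\pi/2\to\pi$, so the limit equals $-\pi$. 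Thus $\int_0^{\pi/2}\xi=\pi/2-\pi=-\pi/2$, as claimed.

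Finally, for the sign, since $\cos^2 t\ge 0$ it suffices to show $N\le 0$ on $[0,\pi/2]$. Direct differentiation produces a clean cancellation,
\begin{equation*}
N'(t)=-\sin 2t+\sin 2t+2t\cos 2t+2t=2t(1+\cos 2t)=4t\cos^2 t,
\end{equation*}
which is nonnegative on $[0,\pi/2]$; combined with $N(\pi/2)=0$ this forces $N\le 0$ throughout. The only substantive obstacle is the indeterminate-form analysis at $t=\pm\pi/2$, which amounts to routine Taylor expansion; the other steps are elementary.
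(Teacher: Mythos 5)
Your proof is correct, and it is more self-contained than the paper's, which cites Ling's Lemma~5 for smoothness, evenness, the boundary values and the sign, and only proves the mean identity \eqref{eq:xi-mean}. The paper does this via the first-order relation $\bigl(\xi(t)\cos^2 t\bigr)'=4t\cos^2 t$, the resulting representation $\xi(t)=-4\sec^2 t\int_t^{\pi/2}s\cos^2 s\,\dd s$, and a Fubini interchange; you instead observe that $\xi(t)=1+\frac{d}{dt}\bigl[(t^2-\pi^2/4)\tan t\bigr]$ and evaluate the endpoint limit $(t^2-\pi^2/4)\tan t\to-\pi$, which is an equivalent but more elementary route (your identity $N'(t)=4t\cos^2 t$ for the numerator $N=\xi\cos^2 t$ is exactly the paper's relation \eqref{eq:xi-firstorder}, so both arguments rest on the same differential identity). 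Your monotonicity argument $N'\ge 0$, $N(\pi/2)=0$ gives the sign $\xi\le 0$ directly, where the paper's sign claim is deferred to the citation (and is recovered in its Section~3 from the integral representation); your Taylor/analyticity analysis at $t=\pi/2$ fills in the smoothness and $\xi(\pm\pi/2)=0$ claims that the paper does not reprove. What the paper's route buys is reuse: the representation $\xi(t)=-4\sec^2 t\int_t^{\pi/2}s\cos^2 s\,\dd s$ is employed again later to establish the pointwise bound $\xi\ge-2$, whereas your antiderivative trick is tailored to the mean identity. Both proofs are valid; yours trades that reusability for complete independence from the reference.
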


\begin{proof}
These properties are established in Lemma~5 of \cite{Ling2006}, where $\xi$ is constructed explicitly and shown to satisfy a linear ODE.
For completeness, we derive \eqref{eq:xi-mean} from the first-order relation
\begin{equation}
\label{eq:xi-firstorder}
\bigl(\xi(t)\cos^2 t\bigr)' = 4t\cos^2 t
\end{equation}
together with the boundary value $\xi(\pi/2)=0$.
Integrating \eqref{eq:xi-firstorder} from $t$ to $\pi/2$ gives
\[
-\xi(t)\cos^2 t=\int_t^{\pi/2}4s\cos^2 s\,\dd s,
\qquad\text{hence}\qquad
\xi(t)=-4\sec^2 t\int_t^{\pi/2}s\cos^2 s\,\dd s.
\]
Using Fubini's theorem, we compute
\begin{align*}
\int_{0}^{\pi/2}\xi(t)\,\dd t
&=-4\int_{0}^{\pi/2}\sec^2 t\left(\int_{t}^{\pi/2}s\cos^2 s\,\dd s\right)\dd t\\
&=-4\int_{0}^{\pi/2}s\cos^2 s\left(\int_{0}^{s}\sec^2 t\,\dd t\right)\dd s
=-4\int_{0}^{\pi/2}s\cos^2 s\,\tan s\,\dd s\\
&=-4\int_{0}^{\pi/2}s\sin s\cos s\,\dd s
=-2\int_{0}^{\pi/2}s\sin(2s)\,\dd s
=-\frac{\pi}{2},
\end{align*}
which proves \eqref{eq:xi-mean}.
\end{proof}

\begin{lemma}[Ling's integral inequality]
\label{lem:ling-integral}
Let $\lambda$ be the first Dirichlet eigenvalue, set $\alpha=(n-1)K/2$ and $\delta=\alpha/\lambda$.
Define
\begin{equation}
\label{eq:z-def}
z(t)=1+\delta\,\xi(t),
\qquad t\in[0,\pi/2].
\end{equation}
Then
\begin{equation}
\label{eq:ling-star}
\sqrt{\lambda}\,\frac{\de}{2}\ \ge\ \int_{0}^{\pi/2}\frac{\dd t}{\sqrt{z(t)}}.
\end{equation}
\end{lemma}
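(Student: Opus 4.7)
The lemma is the standard concluding step of Ling's argument \cite{Ling2006}: once the pointwise gradient-comparison theorem is available, \eqref{eq:ling-star} follows from a short arc-length integration along a shortest curve to the boundary. I would organize the proof in three steps.

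First, I would normalize a first Dirichlet eigenfunction $u$ so that $u\ge 0$ and $\max_M u=1$, choose an interior point $x_0$ with $u(x_0)=1$, and fix a minimizing geodesic $\gamma:[0,L]\to M$ joining $x_0$ to its nearest boundary point. By the definition of $\de$ we have $L\le \de/2$. Second, I would invoke Ling's pointwise gradient comparison from \cite{Ling2006}. Writing $u(x)=\sin t(x)$ with $t(x)\in[0,\pi/2]$, the comparison asserts
\[
|\nabla u(x)|^2\ \le\ \lambda\cos^2 t(x)\, z(t(x)).
\]
This is the substantive analytic input: it is proved by applying the maximum principle to a carefully designed $P$-functional, using $\Ric\ge(n-1)K$ to control the Bochner term and the nonnegativity of the boundary mean curvature to rule out boundary maxima; the function $z=1+\delta\xi$ encodes the optimal one-dimensional model whose associated ODE is precisely the one satisfied by $\xi$ in Lemma~\ref{lem:xi}. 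I would cite this conclusion rather than reprove it.

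Third, along $\gamma$ the chain rule gives $(u\circ\gamma)'(s)=\cos t(s)\, t'(s)$, so combined with $|\nabla u|\ge|(u\circ\gamma)'(s)|$ and the gradient comparison this yields $|t'(s)|\le\sqrt{\lambda\, z(t(s))}$. Since $t(0)=\pi/2$ and $t(L)=0$, separating variables and integrating produces
\[
L\ \ge\ \int_{0}^{\pi/2}\frac{\dd t}{\sqrt{\lambda\, z(t)}}\ =\ \frac{1}{\sqrt{\lambda}}\int_{0}^{\pi/2}\frac{\dd t}{\sqrt{z(t)}},
\]
which combined with $L\le\de/2$ is exactly \eqref{eq:ling-star}.

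The only truly substantive ingredient is Step~2, which is the core of \cite{Ling2006}; relative to that input the remaining work is routine. Minor technical care is needed to (i) ensure $z>0$ on $[0,\pi/2]$, which follows from $\delta<1/2$ (itself guaranteed by Reilly's bound $\lambda\ge nK$) together with the explicit range of $\xi$ furnished by Lemma~\ref{lem:xi}, and (ii) handle the vanishing of $\cos t$ at $t=\pi/2$, which is done by restricting to $[0,\pi/2-\epsilon]$ and passing to the limit once monotonicity of $t(s)$ near $x_0$ is extracted from $|\nabla u|(x_0)=0$ and the maximum principle.
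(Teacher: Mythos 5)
Your proposal is correct and matches the paper's treatment: the paper proves this lemma simply by citing Ling's gradient comparison and the integration along a minimizing geodesic to the boundary (with the normalization parameter $b\downarrow 1$), which is exactly the structure you outline, with the substantive input (the pointwise estimate $|\nabla u|^2\le\lambda\cos^2 t\,z(t)$) cited rather than reproved. The only cosmetic difference is that you handle the degeneracy at the maximum point by an $\epsilon$-truncation and limit, whereas Ling (and the paper's citation of him) uses the auxiliary normalization $b>1$ and lets $b\downarrow 1$; both are standard and equivalent here.
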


\begin{proof}
This is \cite[(44)]{Ling2006}, obtained by integrating a gradient comparison inequality along a minimizing geodesic from a maximum point of the first Dirichlet eigenfunction to the boundary and then letting the normalization parameter $b\downarrow 1$.
By definition of the in-diameter, the length of such a geodesic is at most $\de/2$.
The function $z$ is the explicit comparison function used in \cite[(35)]{Ling2006}.
\end{proof}

\begin{remark}
Ling derives his explicit bound by applying H\"older's inequality (equivalently, Jensen's inequality for the convex function $x\mapsto x^{-1/2}$ with respect to the normalized measure $\tfrac{2}{\pi}\,\dd t$) to \eqref{eq:ling-star}:
\begin{equation}
\int_{0}^{\pi/2}\frac{\dd t}{\sqrt{z(t)}}
\ \ge\
\frac{\left(\int_0^{\pi/2}\dd t\right)^{3/2}}{\left(\int_0^{\pi/2} z(t)\,\dd t\right)^{1/2}}
=
\frac{\pi}{2}\cdot\frac{1}{\sqrt{1-\delta}},
\end{equation}
because $\frac{2}{\pi}\int_0^{\pi/2} z(t)\,\dd t=1-\delta$ by \eqref{eq:xi-mean}.
This step ignores that $z$ is nonconstant. Equality in this Jensen/H\"older step would force $z$ to be constant almost everywhere; since $\xi$ is not constant, this cannot occur when $\delta>0$.
We replace it by a variance-sensitive estimate.
\end{remark}

\section{A strong-convexity refinement}

The key observation is that $x\mapsto x^{-1/2}$ is uniformly strongly convex on $(0,1]$.

\begin{proposition}[Variance improvement for $x^{-1/2}$]
\label{prop:variance}
Let $z:[0,\pi/2]\to(0,1]$ be measurable and set
\begin{equation}
\mu = \frac{2}{\pi}\int_{0}^{\pi/2} z(t)\,\dd t.
\end{equation}
Then
\begin{equation}
\label{eq:variance-ineq}
\frac{2}{\pi}\int_{0}^{\pi/2}\frac{\dd t}{\sqrt{z(t)}}
\ \ge\
\frac{1}{\sqrt{\mu}}+\frac{3}{8}\,\Var(z),
\end{equation}
where
\begin{equation}
\Var(z)=\frac{2}{\pi}\int_{0}^{\pi/2}\bigl(z(t)-\mu\bigr)^2\,\dd t.
\end{equation}
\end{proposition}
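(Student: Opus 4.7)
The plan is to deduce the inequality from a quantitative strong-convexity estimate applied pointwise to $f(x)=x^{-1/2}$, followed by integration against the normalized measure $\frac{2}{\pi}\dd t$ on $[0,\pi/2]$, which will annihilate the linear term by the very definition of $\mu$.

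First I would compute $f'(x)=-\tfrac{1}{2}x^{-3/2}$ and $f''(x)=\tfrac{3}{4}x^{-5/2}$, and observe the central analytic fact: on $(0,1]$ one has $f''(y)\ge \tfrac{3}{4}$. By Taylor's theorem with remainder in Lagrange form expanded about $\mu$, this gives the pointwise bound
\begin{equation*}
\frac{1}{\sqrt{z(t)}}\ \ge\ \frac{1}{\sqrt{\mu}} -\frac{1}{2\mu^{3/2}}(z(t)-\mu)+\frac{3}{8}(z(t)-\mu)^2
\end{equation*}
valid for every $t$, provided the entire interval joining $z(t)$ and $\mu$ lies in $(0,1]$. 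Here I would note that since $z$ takes values in $(0,1]$ by hypothesis, the average $\mu$ also lies in $(0,1]$, and hence so does the full segment between $z(t)$ and $\mu$; this is the only place where the restriction $z\le 1$ is used and it guarantees the uniform lower bound on $f''$.

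Next I would integrate the pointwise inequality against the probability measure $\tfrac{2}{\pi}\dd t$ on $[0,\pi/2]$. The constant term contributes $\mu^{-1/2}$, the quadratic term contributes $\tfrac{3}{8}\Var(z)$ by definition, and the linear term
\begin{equation*}
-\frac{1}{2\mu^{3/2}}\cdot\frac{2}{\pi}\int_{0}^{\pi/2}\bigl(z(t)-\mu\bigr)\dd t
\end{equation*}
vanishes identically because $\mu$ is precisely the $\tfrac{2}{\pi}\dd t$-mean of $z$. Rearranging yields \eqref{eq:variance-ineq}.

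The only nontrivial point — and the one I would state carefully — is the justification of the uniform lower bound $f''\ge \tfrac{3}{4}$ along the full segment between $z(t)$ and $\mu$; everything else is a direct Taylor expansion and an application of the definition of $\mu$. There are no convergence or integrability issues, since $z$ is bounded away from $0$ by hypothesis and $\Var(z)$ is automatically finite because $z$ is bounded. No auxiliary lemma beyond Taylor's theorem is required.
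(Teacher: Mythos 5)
Your proof is correct and follows essentially the same route as the paper's: a pointwise second-order Taylor bound for $f(x)=x^{-1/2}$ at $\mu$ using $f''\ge \tfrac34$ on $(0,1]$, then integration against the probability measure $\tfrac{2}{\pi}\dd t$, under which the linear term cancels by the definition of $\mu$. The only cosmetic differences are your use of the Lagrange rather than the integral form of the remainder and the slightly inaccurate aside that $z$ is ``bounded away from $0$ by hypothesis'' (the hypothesis only gives $z>0$), which is immaterial since the bound is pointwise and \eqref{eq:variance-ineq} holds trivially if the left-hand integral is infinite.
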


\begin{proof}
Let $f(x)=x^{-1/2}$ on $(0,1]$.
Then
\begin{equation}
f''(x)=\frac{3}{4}x^{-5/2}\ge \frac{3}{4}
\qquad \text{for all }x\in(0,1].
\end{equation}
Fix $\mu\in(0,1]$ and use the second-order Taylor expansion of $f$ at $\mu$ with integral remainder.
Using the lower bound on $f''$, we obtain for every $x\in(0,1]$:
\begin{equation}
f(x)\ \ge\ f(\mu)+f'(\mu)(x-\mu)+\frac{3}{8}(x-\mu)^2.
\end{equation}
Apply this pointwise with $x=z(t)$ and integrate over $t\in[0,\pi/2]$.
The linear term vanishes because $\mu$ is the mean of $z$:
\begin{equation}
\frac{2}{\pi}\int_{0}^{\pi/2}\bigl(z(t)-\mu\bigr)\,\dd t = 0.
\end{equation}
This yields \eqref{eq:variance-ineq}.
\end{proof}

We now apply Proposition~\ref{prop:variance} to the specific choice $z(t)=1+\delta\xi(t)$ from \eqref{eq:z-def}.
Note that $z(t)\le 1$ on $[0,\pi/2]$ since $\xi(t)\le 0$ by Lemma~\ref{lem:xi}.
Moreover, \eqref{eq:xi-firstorder} implies the pointwise lower bound $\xi(t)\ge -2$ on $[0,\pi/2]$: indeed, integrating \eqref{eq:xi-firstorder} from $t$ to $\pi/2$ gives
\[
-\xi(t)\cos^2 t\,=\,\int_t^{\pi/2}4s\cos^2 s\,\dd s,
\]
hence
\[
-\xi(t)=4\sec^2 t\int_t^{\pi/2}s\cos^2 s\,\dd s.
\]
Setting $I(t)=\int_t^{\pi/2}s\cos^2 s\,\dd s$ and $F(t)=\tfrac12\cos^2 t-I(t)$, we have $F(\pi/2)=0$ and
\[
F'(t)=-\cos t\sin t+t\cos^2 t=\cos^2 t\,(t-\tan t)\le 0,
\]
since $\tan t\ge t$ for $t\in[0,\pi/2)$.
Thus $F(t)\ge 0$ and $I(t)\le \tfrac12\cos^2 t$, which yields $-\xi(t)\le 2$.
Since Reilly's estimate gives $\delta=\alpha/\lambda\le (n-1)/(2n)<1/2$, we obtain $z(t)=1+\delta\xi(t)\ge 1-2\delta>0$.
Therefore $z(t)\in(0,1]$ and Proposition~\ref{prop:variance} applies.

\begin{lemma}[Mean and variance of $z(t)=1+\delta\xi(t)$]
\label{lem:meanvar-z}
Let $z(t)=1+\delta\xi(t)$ on $[0,\pi/2]$, where $\xi$ is given by \eqref{eq:xi-def}.
Then
\begin{equation}
\label{eq:mu-1-delta}
\mu = \frac{2}{\pi}\int_0^{\pi/2} z(t)\,\dd t = 1-\delta,
\end{equation}
and
\begin{equation}
\label{eq:var-z}
\Var(z)=\delta^2\,\Var(\xi),
\qquad
\Var(\xi)=\E[\xi^2]-1,
\qquad
\E[\xi^2]=\frac{2}{\pi}\int_0^{\pi/2}\xi(t)^2\,\dd t.
\end{equation}
\end{lemma}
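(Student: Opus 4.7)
The plan is to obtain all three identities by direct computation, using only the mean value \eqref{eq:xi-mean} from Lemma~\ref{lem:xi} and the linearity/affinity of $z$ in $\xi$.

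First I would compute $\mu$ by linearity of the integral. Since $z(t)=1+\delta\xi(t)$, one has
\[
\mu=\frac{2}{\pi}\int_0^{\pi/2}\!\bigl(1+\delta\xi(t)\bigr)\dd t
=1+\delta\cdot\frac{2}{\pi}\int_0^{\pi/2}\xi(t)\dd t.
\]
Plugging in \eqref{eq:xi-mean}, which asserts $\int_0^{\pi/2}\xi\dd t=-\pi/2$, the bracketed average equals $-1$, and \eqref{eq:mu-1-delta} follows immediately.

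Next, for the variance identity \eqref{eq:var-z}, I would exploit the fact that $\E[\xi]=-1$ already established. Then
\[
z(t)-\mu=(1+\delta\xi(t))-(1-\delta)=\delta\bigl(\xi(t)-\E[\xi]\bigr),
\]
so squaring and averaging against $\tfrac{2}{\pi}\dd t$ gives $\Var(z)=\delta^2\Var(\xi)$. The identity $\Var(\xi)=\E[\xi^2]-\E[\xi]^2=\E[\xi^2]-1$ is then just the standard bias-variance decomposition, using $\E[\xi]=-1$ once more.

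Since every step is a one-line manipulation relying solely on \eqref{eq:xi-mean} and the affine structure $z=1+\delta\xi$, there is no genuine obstacle; the only point worth stating carefully is the reduction of $\E[\xi]$ to $-1$ via \eqref{eq:xi-mean}, after which the rest is bookkeeping. The substantive computation of $\E[\xi^2]$ (which will eventually yield the constant $V$ in Theorem~\ref{thm:main}) is deliberately left to a later lemma and is not part of the present statement.
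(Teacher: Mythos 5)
Your proposal is correct and matches the paper's own proof: both derive $\mu=1-\delta$ directly from \eqref{eq:xi-mean}, then write $z(t)-\mu=\delta(\xi(t)+1)=\delta(\xi(t)-\E[\xi])$ and conclude $\Var(z)=\delta^2\Var(\xi)$ with $\Var(\xi)=\E[\xi^2]-1$. Nothing further is needed.
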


\begin{proof}
The identity \eqref{eq:mu-1-delta} follows immediately from \eqref{eq:xi-mean}.
Since $\E[\xi]=\frac{2}{\pi}\int_0^{\pi/2}\xi(t)\,\dd t=-1$, we have
\begin{equation}
z(t)-\mu = 1+\delta\xi(t) - (1-\delta)=\delta(\xi(t)+1),
\end{equation}
and hence \eqref{eq:var-z}.
\end{proof}

Combining Proposition~\ref{prop:variance} and Lemma~\ref{lem:meanvar-z} yields the refined lower bound on the integral in \eqref{eq:ling-star}:

\begin{proposition}[Variance-refined integral estimate]
\label{prop:refined-integral}
Let $z(t)=1+\delta\xi(t)$ as in \eqref{eq:z-def}.
Then
\begin{equation}
\label{eq:integral-refined}
\int_{0}^{\pi/2}\frac{\dd t}{\sqrt{z(t)}}
\ \ge\
\frac{\pi}{2}\left(\frac{1}{\sqrt{1-\delta}}+\frac{3}{8}\Var(\xi)\,\delta^2\right).
\end{equation}
In particular, if $\Var(\xi)>0$ and $\delta>0$, the right-hand side is strictly larger than $\frac{\pi}{2}(1-\delta)^{-1/2}$.
\end{proposition}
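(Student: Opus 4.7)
The plan is simply to combine Proposition~\ref{prop:variance} with Lemma~\ref{lem:meanvar-z} by direct substitution: the proposition supplies the strong-convexity correction for the map $x\mapsto x^{-1/2}$, and the lemma identifies the normalized mean and variance of $z(t)=1+\delta\xi(t)$ in closed form.

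The first step is to verify that the hypotheses of Proposition~\ref{prop:variance} are satisfied, i.e.\ that $z(t)\in(0,1]$ on $[0,\pi/2]$. Both bounds have already been recorded in the discussion immediately preceding Lemma~\ref{lem:meanvar-z}: the upper bound $z\le 1$ follows from $\xi\le 0$ (Lemma~\ref{lem:xi}), and the lower bound $z>0$ from the pointwise estimate $-\xi\le 2$ combined with the Reilly-based bound $\delta<1/2$. With applicability in hand, I would apply Proposition~\ref{prop:variance} to $z$ to obtain
\[
\frac{2}{\pi}\int_0^{\pi/2}\frac{\dd t}{\sqrt{z(t)}}\ \ge\ \frac{1}{\sqrt{\mu}}+\frac{3}{8}\Var(z),
\]
and then substitute $\mu=1-\delta$ and $\Var(z)=\delta^2\Var(\xi)$ from Lemma~\ref{lem:meanvar-z}. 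Multiplying through by $\pi/2$ yields exactly \eqref{eq:integral-refined}. The strict improvement is immediate: when $\delta>0$ and $\Var(\xi)>0$, the added term $\frac{3}{8}\Var(\xi)\delta^2$ is strictly positive, so the right-hand side of \eqref{eq:integral-refined} exceeds $\frac{\pi}{2}(1-\delta)^{-1/2}$.

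I do not anticipate any genuine obstacle in carrying out this proof: the analytic content has already been distilled into Proposition~\ref{prop:variance} (strong convexity of $x^{-1/2}$ on $(0,1]$) and into the identity $\E[\xi]=-1$ derived in Lemma~\ref{lem:xi}. The present proposition is a bookkeeping step whose role is to package those two ingredients into the exact form needed to feed into Lemma~\ref{lem:ling-integral} when completing the proof of Theorem~\ref{thm:main}.
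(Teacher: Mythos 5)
Your proposal is correct and matches the paper's own proof, which is exactly the substitution of $\mu=1-\delta$ and $\Var(z)=\delta^2\Var(\xi)$ from Lemma~\ref{lem:meanvar-z} into Proposition~\ref{prop:variance}, with the applicability check $z(t)\in(0,1]$ handled in the discussion preceding that lemma just as you describe.
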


\begin{proof}
This is \eqref{eq:variance-ineq} with $\mu=1-\delta$ and $\Var(z)=\delta^2\Var(\xi)$.
\end{proof}

\section{Evaluation of \texorpdfstring{$\Var(\xi)$}{Var(xi)}}

We now compute the constant $\Var(\xi)$ in closed form.
Set
\begin{equation}
V := \Var(\xi).
\end{equation}

\begin{lemma}[Second moment of $\xi$]
\label{lem:xi-secondmoment}
For $\xi$ defined by \eqref{eq:xi-def},
\begin{equation}
\label{eq:xi-square-integral}
\int_{0}^{\pi/2}\xi(t)^2\,\dd t
=
\pi\left(2\zeta(3)-\frac{\pi^2+1}{6}\right).
\end{equation}
Consequently,
\begin{equation}
\label{eq:V-closedform}
V = 4\,\zeta(3)-\frac{1}{3}(\pi^2+4).
\end{equation}
\end{lemma}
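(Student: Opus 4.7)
The plan is to compute $\int_0^{\pi/2}\xi(t)^2\,dt$ directly; the value $V=4\zeta(3)-(\pi^2+4)/3$ then follows from $V=\E[\xi^2]-(\E[\xi])^2=\tfrac{2}{\pi}\int_0^{\pi/2}\xi^2\,dt-1$, using $\E[\xi]=-1$ from Lemma~\ref{lem:xi}. The transcendental $\zeta(3)$ will enter only through the standard log-cosine identity
\[
\int_0^{\pi/2}t^2\ln\cos t\,dt=-\frac{\pi^3\ln 2}{24}-\frac{\pi}{4}\zeta(3),
\]
which is obtained from the Fourier expansion $\ln(2\cos t)=\sum_{k\ge1}(-1)^{k-1}\cos(2kt)/k$ together with the elementary evaluation $\int_0^{\pi/2}t^2\cos(2kt)\,dt=(-1)^k\pi/(4k^2)$; everything else is trigonometric bookkeeping.

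Writing $N(t)=\xi(t)\cos^2 t$ (the numerator in \eqref{eq:xi-def}), we have $\xi^2=N^2\sec^4 t$. First I would integrate by parts with $u=N^2$ and $dv=\sec^4 t\,dt$, using $\int\sec^4 t\,dt=\tan t+\tfrac13\tan^3 t$. The boundary term at $0$ vanishes because $\tan 0=0$, and the one at $\pi/2$ vanishes because a short Taylor expansion gives $N(t)=-\tfrac{2\pi}{3}(\pi/2-t)^3+O((\pi/2-t)^4)$, which easily dominates the cubic pole of $\tan^3 t$. Using $N'(t)=4t\cos^2 t$ from \eqref{eq:xi-firstorder} and the algebraic simplification $t\cos^2 t(\tan t+\tfrac13\tan^3 t)=\tfrac{t}{3}(\sin 2t+\tan t)$, the computation reduces to
\[
\int_0^{\pi/2}\xi^2\,dt=-\frac{8}{3}\int_0^{\pi/2}N(t)\bigl(t\sin 2t+t\tan t\bigr)\,dt.
\]

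Second, after expanding $N=(\tfrac12-\tfrac{\pi^2}{4})+\tfrac12\cos 2t+t\sin 2t+t^2$, the part paired with $t\sin 2t$ decomposes into four elementary integrals of type $\int_0^{\pi/2}t^a\cos(2kt)\,dt$ or $\int_0^{\pi/2}t^a\sin(2kt)\,dt$, each handled by one or two integrations by parts. For the $t\tan t$ part I would use $\cos^2 t\cdot t\tan t=\tfrac{t}{2}\sin 2t$ and $2t\sin t\cos t\cdot t\tan t=2t^2\sin^2 t$ to isolate the only nonelementary contribution, $\int_0^{\pi/2}(t^3-\tfrac{\pi^2}{4}t)\tan t\,dt$. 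Although its integrand blows up at $\pi/2$, integration by parts with $v=-\ln\cos t$ kills both boundary terms (the factor $t(t^2-\pi^2/4)$ has a simple zero at $\pi/2$ that beats the logarithmic blowup of $\ln\cos t$) and converts the integral to $\int_0^{\pi/2}(3t^2-\tfrac{\pi^2}{4})\ln\cos t\,dt$. Combining the log-cosine identity above with $\int_0^{\pi/2}\ln\cos t\,dt=-\tfrac{\pi\ln 2}{2}$, the $\ln 2$ contributions in $(3t^2-\tfrac{\pi^2}{4})\ln\cos t$ cancel exactly, leaving $-\tfrac{3\pi}{4}\zeta(3)$.

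Reassembling gives $\int_0^{\pi/2}\xi^2\,dt=2\pi\zeta(3)-\tfrac{\pi(1+\pi^2)}{6}$, which is \eqref{eq:xi-square-integral}; dividing by $\pi/2$ and subtracting $1$ then yields $V=4\zeta(3)-(\pi^2+4)/3$. The main obstacle is not any single integral but sign and boundary bookkeeping: several intermediate integrands carry polar or logarithmic singularities at $\pi/2$, and the finite answer rests on exact cancellations---most visibly, the vanishing of the $\ln 2$ terms above, which is the arithmetic signature that isolates $\zeta(3)$. The safest precaution is to keep $N$ (respectively the factor $t^2-\pi^2/4$ inside the $\tan t$ piece) intact until the last moment, since its vanishing at $\pi/2$ is what justifies every integration by parts used above.
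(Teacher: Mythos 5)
Your computation is correct, and I checked it end to end: with $N=\xi\cos^2 t$ one indeed has $N(t)=-\tfrac{2\pi}{3}(\pi/2-t)^3+O((\pi/2-t)^4)$, so the boundary term $N^2(\tan t+\tfrac13\tan^3 t)$ vanishes at $\pi/2$; the identity $t\cos^2 t(\tan t+\tfrac13\tan^3 t)=\tfrac t3(\sin 2t+\tan t)$ holds; the elementary pieces give $\int_0^{\pi/2}N\,t\sin 2t\,\dd t=\tfrac{\pi^3}{48}-\tfrac{5\pi}{16}$ and the non-$\zeta$ part of $\int_0^{\pi/2}N\,t\tan t\,\dd t$ gives $\tfrac{3\pi}{8}+\tfrac{\pi^3}{24}$, while the $\log 2$ terms in $\int_0^{\pi/2}(3t^2-\tfrac{\pi^2}{4})\log\cos t\,\dd t$ cancel to leave $-\tfrac{3\pi}{4}\zeta(3)$; multiplying by $-\tfrac83$ reproduces \eqref{eq:xi-square-integral}, and the passage to \eqref{eq:V-closedform} is immediate. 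Your route differs from the paper's in a useful way: the paper expands \eqref{eq:xi-alt} into $\tan$, $\sec^2$, $\sec^4$ terms and integrates by parts on a truncated interval $[0,\pi/2-\varepsilon]$, tracking a singular boundary term $B_\varepsilon\to-\tfrac{2\pi}{3}$ whose cancellations must be verified by Taylor expansions (Appendix~\ref{app:lem4.1-details}); you instead exploit the first-order relation \eqref{eq:xi-firstorder}, $N'=4t\cos^2 t$, in a single integration by parts of $N^2\sec^4 t$, which removes all singular behaviour at once because $N$ vanishes to third order at $\pi/2$, and you only need the two log-cosine values that the paper also uses (via the same Fourier-series evaluation). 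This is arguably cleaner and avoids the $\varepsilon$-regularization entirely. One cosmetic slip: the integrand $(t^3-\tfrac{\pi^2}{4}t)\tan t$ does not blow up at $\pi/2$ --- the simple zero of the polynomial factor cancels the simple pole of $\tan t$, so it is bounded; your integration by parts with $v=-\log\cos t$ is of course still valid, and the boundary terms vanish exactly as you say.
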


\begin{proof}
A detailed reduction of $\int_0^{\pi/2}\xi(t)^2\,\dd t$ to a short list of logarithmic integrals is given in Appendix~\ref{app:lem4.1-details}.
We record the remaining (standard) Fourier-series evaluations.
First, rewrite \eqref{eq:xi-def} as
\begin{equation}
\label{eq:xi-alt}
\xi(t) = 1 + 2t\tan t + \left(t^2-\frac{\pi^2}{4}\right)\sec^2 t.
\end{equation}
Introduce $L(t)=\log(\cos t)$ on $(0,\pi/2)$, so that $L'(t)=-\tan t$ and $L''(t)=-\sec^2 t$.
Then \eqref{eq:xi-alt} becomes
\begin{equation}
\xi(t)=1-2t\,L'(t)-\left(t^2-\frac{\pi^2}{4}\right)L''(t).
\end{equation}
Expanding $\xi(t)^2$ and integrating by parts repeatedly reduces $\int_0^{\pi/2}\xi(t)^2\,\dd t$ to a linear combination of the three classical integrals
\begin{equation}
\int_0^{\pi/2} L(t)\,\dd t,
\qquad
\int_0^{\pi/2} t\,L(t)\,\dd t,
\qquad
\int_0^{\pi/2} t^2\,L(t)\,\dd t,
\end{equation}
plus elementary polynomial integrals.
The singular boundary terms cancel because $\xi(\pi/2)=0$ and $\xi(t)\cos^2 t$ is smooth up to $t=\pi/2$.

It therefore suffices to evaluate the three logarithmic integrals above.
For $|t|<\pi/2$ one has the absolutely convergent Fourier series
\begin{equation}
\label{eq:logcos-fourier}
\log(2\cos t)=\sum_{k=1}^{\infty}(-1)^{k+1}\frac{\cos(2kt)}{k}.
\end{equation}
Integrating term-by-term yields
\begin{equation}
\int_0^{\pi/2} \log(\cos t)\,\dd t = -\frac{\pi}{2}\log 2.
\end{equation}
A second integration against $t$ and $t^2$, using
\begin{equation}
\int_0^{\pi/2} t\,\cos(2kt)\,\dd t = \frac{(-1)^k-1}{4k^2},
\qquad
\int_0^{\pi/2} t^2\,\cos(2kt)\,\dd t = \frac{\pi(-1)^k}{4k^2},
\end{equation}
shows that
\begin{equation}
\int_0^{\pi/2} t\,\log(\cos t)\,\dd t
=
-\frac{\pi^2}{8}\log 2-\frac{7}{16}\zeta(3),
\end{equation}
and
\begin{equation}
\int_0^{\pi/2} t^2\,\log(\cos t)\,\dd t
=
-\frac{\pi^3}{24}\log 2-\frac{\pi}{4}\zeta(3),
\end{equation}
where we use the classical identity for the alternating zeta value
\begin{equation}
\sum_{k=1}^{\infty}(-1)^{k+1}\frac{1}{k^3}=\left(1-2^{-2}\right)\zeta(3)=\frac{3}{4}\zeta(3).
\end{equation}
Substituting these evaluations into the reduction from Appendix~\ref{app:lem4.1-details} yields \eqref{eq:xi-square-integral}.
Finally, since $\E[\xi]=-1$, we have
\begin{equation}
V=\E[\xi^2]-\E[\xi]^2=\E[\xi^2]-1
=
\frac{2}{\pi}\int_0^{\pi/2}\xi(t)^2\,\dd t-1,
\end{equation}
which gives \eqref{eq:V-closedform}.
\end{proof}

\begin{remark}\label{rem:V-positive}
The constant $V$ is a variance, hence nonnegative by definition.
Moreover $V>0$ because $\xi$ is not (a.e.) constant: using \eqref{eq:xi-alt} one has
\(\xi(0)=1-\pi^2/4<0\) (since $\pi>2$), while $\xi(\pi/2)=0$ by Lemma~\ref{lem:xi}.
For reference, $V\approx 0.1850261456$.
\end{remark}

\section{From the refined integral to an explicit eigenvalue bound}

We now combine the refined integral estimate with a simple one-root majorization to obtain the explicit bound \eqref{eq:mainbound}.

\begin{lemma}[One-root lower bound]
\label{lem:one-root}
Let $V>0$ be as in \eqref{eq:V-closedform}.
For every $\delta\in[0,1/2]$ one has
\begin{equation}
\label{eq:one-root}
\frac{1}{\sqrt{1-\delta}}+\frac{3}{8}V\,\delta^2
\ \ge\
\frac{1}{\sqrt{1-\delta-\frac{V}{4}\delta^2}}.
\end{equation}
\end{lemma}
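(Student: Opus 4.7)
The plan is to reduce \eqref{eq:one-root} to a single scalar inequality that can be checked from the numerical value of $V$. Setting $s = 1-\delta \in [1/2,1]$ and $\gamma = V\delta^2/4 \ge 0$, the claim becomes $(s-\gamma)^{-1/2} - s^{-1/2} \le 3\gamma/2$. To bound the left-hand side, I would use the convexity of $f(x) = x^{-1/2}$: the tangent-line inequality $f(y) \ge f(x) + f'(x)(y-x)$, applied at $x = s-\gamma$ and $y = s$, rearranges to
\[
\frac{1}{\sqrt{s-\gamma}} - \frac{1}{\sqrt{s}} \;\le\; -\gamma\, f'(s-\gamma) \;=\; \frac{\gamma}{2\,(s-\gamma)^{3/2}}.
\]

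It therefore suffices to establish the pointwise bound $(s-\gamma)^{-3/2} \le 3$, i.e., $1 - \delta - V\delta^2/4 \ge 3^{-2/3}$ on $[0,1/2]$. The left-hand side is monotonically decreasing in $\delta$ (its derivative is $-1 - V\delta/2 < 0$), so its minimum on $[0,1/2]$ is attained at $\delta = 1/2$. The lemma then reduces to the single scalar condition $\tfrac{1}{2} - \tfrac{V}{16} \ge 3^{-2/3}$, equivalently $V \le 8 - 16 \cdot 3^{-2/3}$.

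The final step is a rigorous numerical verification. Using the closed form $V = 4\zeta(3) - (\pi^2+4)/3$ from Lemma~\ref{lem:xi-secondmoment}, standard estimates such as $\zeta(3) < 1.21$ (from truncating the defining series and bounding the tail by an integral) and $\pi^2 > 9.86$ give $V < 0.22$, while $3^{2/3} = 9^{1/3} > 2.08$ shows $8 - 16 \cdot 3^{-2/3} > 0.3$. The hard part is therefore nothing conceptual: it is simply ensuring that the crude tangent-line bound leaves enough room at $\delta = 1/2$, which it does with margin about $0.12$. If a sharper estimate ever became necessary, a clean backup is to square \eqref{eq:one-root} and clear denominators, reducing to the equivalent inequality $3\sqrt{1-\delta}\,(1-\delta - V\delta^2/4) \ge 1$ on $[0,1/2]$; a short derivative calculation shows its left-hand side is monotonically decreasing, and evaluation at $\delta = 1/2$ yields the more lenient threshold $V \le 8 - 16\sqrt{2}/3 \approx 0.46$.
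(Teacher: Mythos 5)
Your proof is correct and is essentially the paper's own argument: your tangent-line inequality for $x^{-1/2}$ at the point $1-\delta-\tfrac{V}{4}\delta^2$ is exactly the paper's convexity step (phrased there via $h(s)=(1-\delta-s\delta^2)^{-1/2}$ on $[0,V/4]$), both arrive at the same bound $\tfrac{V\delta^2}{8}\bigl(1-\delta-\tfrac{V}{4}\delta^2\bigr)^{-3/2}$, and both reduce to the uniform estimate $\bigl(\tfrac12-\tfrac{V}{16}\bigr)^{-3/2}\le 3$, which you certify with decimal bounds ($\zeta(3)<1.21$, $\pi^2>9.86$, $9^{1/3}>2.08$) while the paper uses exact rational arithmetic ($V<\tfrac14$, $(64/31)^{3/2}<3$). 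The only quibble is your aside describing $3\sqrt{1-\delta}\,(1-\delta-V\delta^2/4)\ge 1$ as \emph{equivalent} to \eqref{eq:one-root}: it is only a sufficient condition (it does not arise from squaring and clearing denominators), but since it is offered merely as a backup, this does not affect the proof.
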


\begin{proof}
Fix $\delta\in[0,1/2]$ and consider the function
\begin{equation}
h(s)=(1-\delta-s\delta^2)^{-1/2}
\qquad\text{for } s\in\left[0,\frac{V}{4}\right].
\end{equation}
Then $h$ is increasing and convex in $s$.
By convexity,
\begin{equation}
h\left(\frac{V}{4}\right)-h(0)\le \frac{V}{4}\,h'\left(\frac{V}{4}\right).
\end{equation}
Since
\begin{equation}
h'(s)=\frac{\delta^2}{2}(1-\delta-s\delta^2)^{-3/2},
\end{equation}
we obtain
\begin{equation}
h\left(\frac{V}{4}\right)
\le
\frac{1}{\sqrt{1-\delta}}+\frac{V}{8}\,\delta^2\left(1-\delta-\frac{V}{4}\delta^2\right)^{-3/2}.
\end{equation}
For $\delta\in[0,1/2]$ we have the uniform lower bound
\begin{equation}
1-\delta-\frac{V}{4}\delta^2 \ge \frac{1}{2}-\frac{V}{16}.
\end{equation}
We now bound this factor uniformly using only explicit analytic inequalities (in particular, without inserting a decimal approximation for $V$). 

Recall from \eqref{eq:V-closedform} that $V=4\,\zeta(3)-\frac{1}{3}(\pi^2+4)$.
We first claim that $V<\frac{1}{4}$.
By the integral test,
\begin{equation}
\zeta(3)=\sum_{m=1}^{\infty}\frac{1}{m^3}
<
\sum_{m=1}^{5}\frac{1}{m^3}+\int_{5}^{\infty}x^{-3}\,\dd x
=
\left(1+\frac18+\frac1{27}+\frac1{64}+\frac1{125}\right)+\frac1{50}
=
\frac{260423}{216000}.
\end{equation}
Hence $4\zeta(3)<\frac{260423}{54000}$.
On the other hand, the classical bound $\pi>\frac{223}{71}$ implies
$\pi^2>\left(\frac{223}{71}\right)^2=\frac{49729}{5041}>\frac{493}{50}$, so
\begin{equation}
\frac{\pi^2+4}{3}>\frac{\frac{493}{50}+4}{3}=\frac{231}{50}.
\end{equation}
Consequently,
\begin{equation}
V = 4\zeta(3)-\frac{1}{3}(\pi^2+4)<\frac{260423}{54000}-\frac{231}{50}
=\frac{10943}{54000}<\frac14.
\end{equation}
Using $V<\frac14$ we get $\frac12-\frac{V}{16}>\frac12-\frac{1}{64}=\frac{31}{64}$, and therefore
\begin{equation}
\left(\frac12-\frac{V}{16}\right)^{-3/2}<
\left(\frac{31}{64}\right)^{-3/2}=
\left(\frac{64}{31}\right)^{3/2}<3,
\end{equation}
since
\(
\left(\frac{64}{31}\right)^{3/2}<3
\iff
\left(\frac{64}{31}\right)^3<9
\),
and indeed
\(
\left(\frac{64}{31}\right)^3=\frac{262144}{29791}<9
\)
because $9\cdot 29791=268119>262144$.
Therefore
\begin{equation}
\left(1-\delta-\frac{V}{4}\delta^2\right)^{-3/2}\le 3,
\end{equation}
and hence
\begin{equation}
h\left(\frac{V}{4}\right)
\le
\frac{1}{\sqrt{1-\delta}}+\frac{3V}{8}\,\delta^2,
\end{equation}
which is exactly \eqref{eq:one-root}.
\end{proof}

\begin{remark}
Lemma~\ref{lem:one-root} is used only to turn the additive variance correction in \eqref{eq:integral-refined} into the simple denominator in \eqref{eq:lambda-rational}, and hence into the closed-form bound \eqref{eq:mainbound}. If one keeps \eqref{eq:integral-refined} without Lemma~\ref{lem:one-root}, one gets a slightly stronger (but implicit) lower bound for $\lambda$.
\end{remark}

\begin{lemma}[Range of $\delta$]
\label{lem:delta-range}
Under the standing geometric assumptions,
\begin{equation}
0\le \delta=\frac{\alpha}{\lambda}\le \frac{n-1}{2n}<\frac{1}{2}.
\end{equation}
\end{lemma}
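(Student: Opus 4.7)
The plan is to read off both inequalities directly from the definition of $\delta$ and from Reilly's estimate $\lambda\ge nK$, which was already recalled in the introduction under exactly the hypotheses of this lemma (namely, $\Ric\ge(n-1)K$ with $K>0$ and nonnegative boundary mean curvature). No further geometric input is needed; the lemma's sole purpose is to guarantee that the one-dimensional analysis of Proposition~\ref{prop:variance} and Lemma~\ref{lem:xi-secondmoment} applies to the specific comparison function $z=1+\delta\xi$ appearing in Lemma~\ref{lem:ling-integral}.

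For the lower bound, I would first note that $\alpha=(n-1)K/2$ is nonnegative because $K>0$ and $n\ge 2$, and that $\lambda>0$ because the first Dirichlet eigenvalue on a compact manifold with nonempty boundary is strictly positive. Consequently $\delta=\alpha/\lambda\ge 0$. For the upper bound, I would invoke Reilly's inequality $\lambda\ge nK$ and compute
\[
\delta=\frac{\alpha}{\lambda}=\frac{(n-1)K/2}{\lambda}\le\frac{(n-1)K/2}{nK}=\frac{n-1}{2n}.
\]
The last strict inequality then follows from the algebraic identity
\[
\frac{n-1}{2n}=\frac12-\frac{1}{2n}<\frac12,
\]
which holds for every $n\ge 2$.

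The step I expect to be the main (and essentially only) point worth flagging is the appeal to Reilly's bound: one must verify that the hypotheses of \cite{Reilly1977} are exactly the standing assumptions (lower Ricci bound $(n-1)K$ with $K>0$ and nonnegative mean curvature of $\partial M$ with respect to the outward normal), so that $\lambda\ge nK$ really is available. Everything else is a one-line arithmetic manipulation, so the proof can be written very compactly.
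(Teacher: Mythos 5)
Your proposal is correct and follows the paper's own argument exactly: the upper bound comes from Reilly's estimate $\lambda\ge nK$ combined with $\alpha=(n-1)K/2$, and the remaining inequalities ($\delta\ge 0$ and $\tfrac{n-1}{2n}<\tfrac12$) are the same trivial observations the paper leaves implicit. Nothing to flag.
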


\begin{proof}
By Reilly's estimate $\lambda\ge nK$ and $\alpha=(n-1)K/2$ we obtain
\begin{equation}
\delta=\frac{\alpha}{\lambda}\le \frac{(n-1)K/2}{nK}=\frac{n-1}{2n}.
\end{equation}
\end{proof}

\begin{proof}[Proof of Theorem~\ref{thm:main}]
Set $\delta=\alpha/\lambda$.
By Lemma~\ref{lem:ling-integral}, Proposition~\ref{prop:refined-integral}, Lemma~\ref{lem:xi-secondmoment}, and Lemmas~\ref{lem:one-root}--\ref{lem:delta-range}, we have
\begin{equation}
\sqrt{\lambda}\,\frac{\de}{2}
\ \ge\
\int_{0}^{\pi/2}\frac{\dd t}{\sqrt{z(t)}}
\ \ge\
\frac{\pi}{2}\cdot\frac{1}{\sqrt{1-\delta-\frac{V}{4}\delta^2}}.
\end{equation}
Squaring and writing $D=\pi^2/\de^{\,2}$ yields
\begin{equation}
\label{eq:lambda-rational}
\lambda \ge \frac{D}{1-\delta-\frac{V}{4}\delta^2}.
\end{equation}
Substitute $\delta=\alpha/\lambda$ into \eqref{eq:lambda-rational} and clear denominators:
\begin{equation}
\lambda\left(1-\frac{\alpha}{\lambda}-\frac{V}{4}\frac{\alpha^2}{\lambda^2}\right)\ge D.
\end{equation}
Equivalently,
\begin{equation}
\lambda^2-(\alpha+D)\lambda-\frac{V}{4}\alpha^2\ge 0.
\end{equation}
Since $\lambda>0$, this quadratic inequality implies
\begin{equation}
\lambda\ge \frac{(\alpha+D)+\sqrt{(\alpha+D)^2+V\,\alpha^2}}{2},
\end{equation}
which is \eqref{eq:mainbound}.

Finally, $V>0$ by \eqref{eq:V-closedform}, so the square-root term is strictly larger than $\alpha+D$ whenever $\alpha>0$, proving the strict improvement when $K>0$.
\end{proof}

\section{Concluding remark}

In order to make the size of the refinement more transparent, let
\[
B_{\mathrm{Ling}}:=\alpha+D
\qquad\text{and}\qquad
B_{\mathrm{var}}:=\frac{(\alpha+D)+\sqrt{(\alpha+D)^2+V\,\alpha^2}}{2}
\]
denote, respectively, Ling's lower bound and the variance-refined bound \eqref{eq:mainbound}.
A direct computation yields
\[
\frac{B_{\mathrm{var}}}{B_{\mathrm{Ling}}}
=
\frac{1+\sqrt{1+V\left(\frac{\alpha}{\alpha+D}\right)^2}}{2},
\]
so in particular
\[
1\ <\ \frac{B_{\mathrm{var}}}{B_{\mathrm{Ling}}}
\ \le\
\frac{1+\sqrt{1+V}}{2}
\ \approx\ 1.0443.
\]
Thus the improvement over Ling's estimate is universally bounded by about $4.5\%$ in relative terms.
The gain is governed by the dimensionless ratio
\[
\frac{\alpha}{D}=\frac{(n-1)K\,\de^{\,2}}{2\pi^2},
\]
and becomes more pronounced when the curvature term $\alpha$ dominates the diameter term $D$.
As a model case, for a geodesic hemisphere of the round $n$--sphere scaled so that $\Ric=(n-1)K$,
one has $\de=\pi/\sqrt{K}$ and hence $D=K$, so that $\alpha/(\alpha+D)=(n-1)/(n+1)$; this yields a
relative gain in the lower bound of about $3\%$ already for $n=10$, and it approaches the universal
limit $(1+\sqrt{1+V})/2-1\approx 4.4\%$ as $n\to\infty$.\\

In summary the improvement \eqref{eq:mainbound} is small but uniform: it depends only on the universal constant $V=\Var(\xi)>0$ that measures the nonconstancy of the one-dimensional comparison function $z(t)$.
It shows that the H\"older/Jensen reduction in \cite{Ling2006} is not optimal and can be sharpened while retaining a closed-form dependence on $K$ and $\de$.

\appendix

\section{Details for Lemma~\texorpdfstring{\ref{lem:xi-secondmoment}}{4.1}}
\label{app:lem4.1-details}

Write $a:=\pi/2$ and $A:=\pi^2/4$, and set $P(t):=t^2-A$.
For $0<\varepsilon<a$ define $u:=a-\varepsilon$ and
\(
I_{\varepsilon}:=\int_{0}^{u}\xi(t)^2\,\dd t
\),
where $\xi$ is given by \eqref{eq:xi-alt}.
Since $u<a$, all functions below are smooth on $[0,u]$, so the integrations by parts are classical; we take the limit $\varepsilon\downarrow 0$ at the end.

\subsection*{A.1. Reduction to logarithmic integrals}
Expanding \eqref{eq:xi-alt} gives
\begin{equation}
\label{eq:appendix-expand}
\xi(t)^2
=
1+4t^2\tan^2 t + P(t)^2\sec^4 t +4t\tan t +2P(t)\sec^2 t +4tP(t)\tan t\sec^2 t.
\end{equation}
Using $\tan^2 t=\sec^2 t-1$, we rewrite the first two terms on the right as
\(
1+4t^2\tan^2 t = 1-4t^2+4t^2\sec^2 t
\)
and hence
\begin{equation}
\label{eq:appendix-group}
\xi(t)^2
=
1-4t^2 + 4t\tan t + 2(3t^2-A)\sec^2 t + P(t)^2\sec^4 t + 4tP(t)\tan t\sec^2 t.
\end{equation}

We now integrate each group in \eqref{eq:appendix-group} over $[0,u]$.
First, integrating the $\sec^2$-term by parts using $(\tan t)'=\sec^2 t$ yields
\begin{equation}
\label{eq:appendix-sec2-first}
\int_0^u 2(3t^2-A)\sec^2 t\,\dd t
= 2(3u^2-A)\tan u - 12\int_0^u t\tan t\,\dd t.
\end{equation}
Second, we treat the $\sec^4$-term using the elementary identity
\begin{equation}
\label{eq:appendix-sec4-id}
\bigl(\tan t\,\sec^2 t\bigr)' = 3\sec^4 t - 2\sec^2 t,
\end{equation}
which follows by direct differentiation.
Multiplying \eqref{eq:appendix-sec4-id} by $P(t)^2$ and integrating by parts gives
\begin{align}
\label{eq:appendix-sec4-reduce}
\int_0^u P(t)^2\sec^4 t\,\dd t
&= \frac13 P(u)^2\tan u\,\sec^2 u
 - \frac13\int_0^u (P(t)^2)'\tan t\,\sec^2 t\,\dd t
 + \frac23\int_0^u P(t)^2\sec^2 t\,\dd t.
\end{align}
Since $(P(t)^2)'=4tP(t)$, the middle integral in \eqref{eq:appendix-sec4-reduce} combines with the last term in \eqref{eq:appendix-group}.
Using moreover
\begin{equation}
\label{eq:appendix-tansec2}
\tan t\,\sec^2 t = \frac12\bigl(\sec^2 t\bigr)'
\end{equation}
and integrating by parts once more, one arrives at the identity
\begin{align}
\label{eq:appendix-Ieps-formula}
I_{\varepsilon}
&=
u-\frac{4}{3}u^3
 + B_{\varepsilon}
 + \frac{2\pi^2}{3}\int_0^u \log(\cos t)\,\dd t
 - 8\int_0^u t^2\log(\cos t)\,\dd t,
\end{align}
where the boundary term is
\begin{align}
\label{eq:appendix-boundary}
B_{\varepsilon}
:=\;&
\frac{2}{3}(3u^2-A)\tan u
+\frac{1}{3}P(u)^2\tan u\,\sec^2 u\nonumber\\
+&\frac{4}{3}uP(u)\sec^2 u
+\frac{2}{3}P(u)^2\tan u
+\frac{8}{3}(u^3-Au)\log(\cos u).
\end{align}
The algebraic manipulations leading to \eqref{eq:appendix-Ieps-formula} use only \eqref{eq:appendix-sec2-first}--\eqref{eq:appendix-tansec2} and repeated integration by parts.

\subsection*{A.2. Passage to the endpoint}
As $\varepsilon\downarrow 0$ one has $u\uparrow a$ and
\(
\tan u = \cot\varepsilon,\ \sec^2 u = \csc^2\varepsilon,\ \log(\cos u)=\log(\sin\varepsilon).
\)
Using $P(u)=u^2-A= -\pi\varepsilon+O(\varepsilon^2)$ and the standard expansions
\begin{equation}
	\cot\varepsilon = \frac{1}{\varepsilon}-\frac{\varepsilon}{3}+O(\varepsilon^3),
	\qquad
	\csc^2\varepsilon = \frac{1}{\varepsilon^2}+\frac{1}{3}+O(\varepsilon^2),
	\qquad
	\log(\sin\varepsilon)=\log\varepsilon+O(\varepsilon^2),
\end{equation}
at $0$, one checks that all singular contributions in \eqref{eq:appendix-boundary} cancel and
\begin{equation}
	\label{eq:appendix-boundary-limit}
	\lim_{\varepsilon\downarrow 0} B_{\varepsilon} = -\frac{2\pi}{3}.
\end{equation}
Letting $\varepsilon\downarrow 0$ in \eqref{eq:appendix-Ieps-formula} therefore yields the reduction
\begin{equation}
	\label{eq:appendix-reduction-final}
	\int_0^{\pi/2}\xi(t)^2\,\dd t
	= \frac{\pi}{2}-\frac{\pi^3}{6}-\frac{2\pi}{3}
	+ \frac{2\pi^2}{3}\int_0^{\pi/2}\log(\cos t)\,\dd t
	- 8\int_0^{\pi/2}t^2\log(\cos t)\,\dd t.
\end{equation}
Substituting the two logarithmic integrals evaluated in the proof of Lemma~\ref{lem:xi-secondmoment} into \eqref{eq:appendix-reduction-final} gives \eqref{eq:xi-square-integral}.

\newpage

\end{document}